\colorlet{darkishRed}{red!80!black}
\colorlet{darkishBlue}{blue!60!black}
\colorlet{darkishGreen}{green!60!black}
\renewcommand{\subset}{\subseteq}
\newcommand{\Abs}[1]{\partial_{\Omega} {#1}}
\newcommand{ \N } { \mathbb{N} }
\def\calCommandfactory#1{%
   \expandafter\def\csname c#1\endcsname{\mathcal{#1}}}
\def\frakCommandfactory#1{%
   \expandafter\def\csname frak#1\endcsname{\mathfrak{#1}}}
\newcounter{ctr}
  \edef\X{\@Alph\c@ctr}
  \edef\Y{\@alph\c@ctr}
\renewcommand{\cC}{\mathscr{C}}
\newtheorem{theorem}{Theorem}[section]
\newtheorem{mainresult}{Theorem}
\newtheorem{mainproblem}{Problem}
\newtheorem{corollary}[theorem]{Corollary}
\newtheorem{lemma}[theorem]{Lemma}
\newenvironment{customthm}[1]
  {\innercustomthm}
  {\endinnercustomthm}
\theoremstyle{definition}
\theoremstyle{remark}
\newtheorem*{ack}{Acknowledgements}
\title{Approximating infinite graphs by normal trees}
\author{Jan Kurkofka}
\address{University of Hamburg, Department of Mathematics, Bundesstraße 55 (Geomatikum), 20146 Hamburg, Germany}
\email{\{jan.kurkofka, ruben.melcher, max.pitz\}@uni-hamburg.de}
\author{Ruben Melcher}
\author{Max Pitz}
\keywords{infinite graph; end; end space; paracompact; normal spanning tree}
\subjclass[2010]{05C63, 54D20}
\begin{document}
\begin{abstract}
We show that every connected graph can be approximated by a normal tree, up to some arbitrarily small error phrased in terms of neighbourhoods around its ends.
The existence of such approximate normal trees has consequences of both combinatorial and topological nature.

On the combinatorial side, we show that a graph has a normal spanning tree as soon as it has normal spanning trees locally at each end;  i.e., the only obstruction for a graph to having a normal spanning tree is an end for which none of its neighbourhoods has a normal spanning tree.

On the topological side, we show that the end space $\Omega(G)$, as well as the spaces $|G| = G \cup \Omega(G)$ naturally associated with a graph $G$, are always paracompact. This gives unified and short proofs for a number of results by Diestel, Sprüssel and Polat, and answers an open question about metrizability of end spaces by Polat. 
\end{abstract}
\maketitle

\section{Introduction}

\noindent A rooted tree $T$ contained in a graph $G$ is \emph{normal} in $G$ if the endvertices of every \mbox{$T$-path} in $G$ are comparable in the tree-order of~$T$. 
(In finite graphs, normal spanning trees are their depth-first search trees; see~\cite{diestel2015book} for precise definitions.)
Normal spanning trees are perhaps the most useful structural tool in infinite graph theory. 
Their importance arises from the fact that they capture the separation properties of the graph they span, and so in many situations it suffices to deal with the much simpler tree structure instead of the whole graph. 
For example, the end space of $G$ coincides, even topologically, with the end space of any normal spanning tree of $G$. However, not every connected graph has a normal spanning tree, and the structure of graphs without normal spanning trees is still not completely understood~\cite{BowlerGeschkePitzNST,DiestelLeaderNST}.

In order to harness and transfer the power of normal spanning trees to arbitrary connected graphs~$G$, one might try to find an `approximate normal spanning tree': a normal tree in $G$ which spans the graph up to some arbitrarily small given error term. 
To formalize this idea, recall that, as usual, a \emph{neighbourhood} of an end is the component of $G-X$ which contains a tail of every ray of that end, for some (arbitrarily large) finite 
set of vertices $X\subseteq V(G)$. 
We say that a graph $G$ can be \emph{approximated by normal trees} if for every selection of arbitrarily small neighbourhoods around its ends there is a normal tree $T\subset G$ such that every component of $G-T$ is included in one of the selected neighbourhoods and every end of $G$ has some neighbourhood in $G$ that avoids $T$.

Our approximation result for normal trees in infinite graphs then reads as follows:

\begin{customthm}{\ref{thm_paracompact}}
Every connected graph can be approximated by normal trees.
\end{customthm}

\noindent Note that the normal trees provided by our theorem will always be rayless, see also Section~\ref{sec_para}.

We indicate the potential of Theorem~\ref{thm_paracompact} by a number of applications. 
Our first two applications are of combinatorial nature: we exhibit in Section~\ref{sec_cor} two new existence results for normal spanning trees that Theorem~\ref{thm_paracompactintro} implies.
One of these, Theorem~\ref{thm_local NST gives global NST}, says that if every end of a connected graph $G$ has a neighbourhood which has a normal spanning tree then $G$ itself has a normal spanning tree. 

Interestingly, Theorem \ref{thm_paracompact} may not only be read as a structural result for connected graphs: it also implies and extends a number of previously hard results about topological properties of end spaces \cite{diestel1992end, diestel2006end, diestel2003graph, polat1996ends, polat1996ends2, sprussel2008end}.
Denote by $\Omega(G)$ the end space of a graph $G$, and by $\vert G\vert$ the space on $G\cup\Omega(G)$  naturally associated with the graph $G$ and its ends; see the next section for precise definitions.
When $G$ is locally finite and connected, then $\Omega(G)$ is compact, and $\vert G\vert$ is the well-known Freudenthal compactification of $G$.
For arbitrary $G$, the spaces $\Omega(G)$ and $\vert G\vert$ are usually non-compact and far from being completely understood.

Polat has shown that $\Omega(G)$ is ultrametrizable if and only if $G$ contains a topologically end-faithful normal tree \cite[Theorem~5.13]{polat1996ends}, and has proved as a crucial auxiliary step that end spaces are always collectionwise normal \cite[Lemma~4.14]{polat1996ends}. Changing focus from $\Omega(G)$ to $|G|$, Sprüssel has shown that $|G|$ is normal \cite{sprussel2008end}, and Diestel has characterised when $|G|$ is metrizable or compact \cite{diestel2006end} in terms of certain normal spanning trees in $G$.
Our combinatorial Theorem~\ref{thm_paracompactintro} provides, in just a few lines, new and unified proofs for all these results. Additionally, Theorem~\ref{thm_paracompactintro} shows that metrizable end spaces are always ultrametrizable (Theorem~\ref{cor_char. OmegaG metr.}), answering an open question by Polat. 

Finally, Theorem~\ref{thm_paracompactintro} brings new progress to an old problem of Diestel, which asks for a topological charactersation of all end spaces {\cite[Problem~5.1]{diestel1992end}}. 
Indeed, note that Theorem~\ref{thm_paracompactintro} translates to the topological assertion that every open cover of an end space can be refined to an open partition cover, Corollary~\ref{cor_ultrapara}. 
This last property is known in the literature as ultra-paracompactness. It implies that all spaces $|G|$ are paracompact (Corollary~\ref{cor_modGpara}), and that all end spaces $\Omega(G)$ are even hereditarily ultra-paracompact (Corollary~\ref{cor_herpara}). 

This paper is organised as follows: The next section contains a recap on end spaces and other technical terms. Section~\ref{sec_para} contains the proof of our main result, and Section~\ref{sec_cor} derives the consequences outlined above. 
Section~\ref{sec_last} indicates a simple argument showing that subspaces of end spaces inherit their property of being ultra-paracompact. 

\begin{ack}
We are grateful to our referees for their comments and a  suggestion that simplified the proof of our main result.
\end{ack}

\quad \newline

\noindent

\section{End spaces of graphs: a reminder}
\label{sec_prelims}

For graph theoretic terms we follow the terminology in \cite{diestel2015book}, and in particular \cite[Chapter~8]{diestel2015book} for ends in graphs and the spaces $\Omega(G)$ and $|G|$. 
A $1$-way infinite path is called a \emph{ray} and the subrays of a ray are its \emph{tails}. Two rays in a graph $G = (V,E)$ are \emph{equivalent} if no finite set of vertices separates them; the corresponding equivalence classes of rays are the \emph{ends} of $G$. The set of ends of a graph $G$ is denoted by $\Omega = \Omega(G)$. If $X \subseteq V$ is finite and $\omega \in \Omega$, 
there is a unique component of $G-X$ that contains a tail of every ray in $\omega$, which we denote by $C(X,\omega)$. 
If $C$ is any component of $G-X$, we write $\Omega(X,C)$ for the set of ends $\omega$ of $G$ with $C(X,\omega) = C$, and abbreviate $\Omega(X,\omega) := \Omega(X,C(X,\omega))$. 
Finally, if $\cC$ is any collection of components of $G-X$, we write $\Omega(X,\cC) := \bigcup\,\{\,\Omega(X,C) \colon C \in \cC\,\}$.

The collection of all sets $\Omega(X,C)$ with $X\subset V$ finite and $C$ a component of $G-X$ form a basis for a topology on $\Omega$.
This topology is Hausdorff, and it is \emph{zero-dimensional} in that it has a basis consisting of closed-and-open sets. Note that when considering end spaces $\Omega(G)$, we may always assume that $G$ is connected; adding one new vertex and choosing a neighbour for it in each component does not affect the end space.

We now describe two common ways to extend this topology on $\Omega(G)$ to a topology on $|G| = G \cup \Omega(G)$, the graph $G$ together with its ends. 
The first topology, called \textsc{Top}, has a basis formed by all open sets of $G$ considered as a $1$-complex, together with basic open neighbourhoods for ends of the form
\begin{align*}
    \hat{C}_*(X,\omega) := C(X,\omega) \cup \Omega(X,\omega) \cup \mathring{E}_*(X, C(X,\omega)),
\end{align*}
where $\mathring{E}_*(X, C(X,\omega))$ denotes any union of half-open intervals of all the edges from the edge cut $E(X, C(X,\omega))$ with endpoint in $C(X,\omega)$.

As the $1$-complex topology on $G$ is not first-countable at vertices of infinite degree, it is sometimes useful to consider a metric topology on $G$ instead: 
The second topology commonly considered, called \textsc{MTop}, has a basis formed by all open sets of $G$ considered as a metric length-space (i.e.\ every edge together with its endvertices forms a unit interval of length~$1$, and the distance between two points of the graph is the length of a shortest arc in $G$ between them), together with basic open neighbourhoods for ends of the form
\begin{align*}
    \hat{C}_\varepsilon(X,\omega) := C(X,\omega) \cup \Omega(X,\omega) \cup \mathring{E}_\varepsilon(X, C(X,\omega)),
\end{align*}
where $\mathring{E}_\varepsilon(X, C(X,\omega))$ denotes the open ball around $C(X,\omega)$ in $G$ of radius $\varepsilon$. Note that both topologies \textsc{Top} and \textsc{MTop} induce the same subspace topology on $\hat{V}(G) := V(G) \cup \Omega(G)$ and $\Omega(G)$, the last of which coincides with the topology on $\Omega(G)$ described above. Polat observed that $\hat{V}(G)$ is homeomorphic with $\Omega(G^+)$, where $G^+$ denotes the graph obtained from $G$ by gluing a new ray $R_v$ onto each vertex $v$ of $G$ so that $R_v$ meets $G$ precisely in its first vertex $v$ and $R_v$ is distinct from all other $R_{v'}$, cf.~\cite[\S4.16]{polat1996ends}.

A \emph{direction} on $G$ is a function $d$ that assigns to every finite $X \subseteq V$ one of the components of $G-X$ so that $d(X) \supseteq d(X')$ whenever $X \subseteq X'$. For every end $\omega$, the map $X \mapsto C(X,\omega)$ is easily seen to be a direction. Conversely, every direction is defined by an end in this way:

\begin{theorem}[Diestel \& K\"uhn {\cite{diestel2003graph}}]
\label{thm_direction}
For every direction $d$ on a graph $G$ there is an end $\omega$ such that $d(X) = C(X, \omega)$ for every finite $X \subseteq V(G)$.
\end{theorem}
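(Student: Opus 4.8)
The plan is to prove that every direction $d$ on $G$ is induced by some end $\omega$, by using $d$ to guide the construction of a ray whose finite separators can only be beaten by $d$. First I would fix an exhaustion of $V(G)$ by finite sets: since $G$ is connected and $V$ may be uncountable, I would more carefully work with an increasing sequence $X_0 \subseteq X_1 \subseteq \cdots$ of finite vertex sets, built greedily, so that eventually every vertex that matters gets captured; the key point is that $d(X_0) \supseteq d(X_1) \supseteq \cdots$ is a decreasing nested sequence of connected subgraphs. I would choose a vertex $v_n$ inside $d(X_n)$ for each $n$, and since $d(X_n)$ is connected and contains both $v_n$ and (a vertex adjacent to) the earlier pieces, I would connect consecutive choices by finite paths lying inside $d(X_n)$, concatenating them into a ray $R$. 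The direction-compatibility $d(X) \supseteq d(X')$ for $X \subseteq X'$ is exactly what guarantees these paths fit together coherently and that the resulting walk is indeed infinite (it cannot get stuck, because each $d(X_n)$ is an infinite component — it has to be infinite since $d$ assigns a component for arbitrarily large $X$).

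The second step is to let $\omega$ be the end containing (a tail of) the ray $R$ constructed above — one must first check $R$ has a tail in every component-of-the-direction, equivalently that $\omega$ is well-defined — and then to verify $C(X, \omega) = d(X)$ for every finite $X \subseteq V$. Given an arbitrary finite $X$, I would find an index $n$ with $X \subseteq X_n$; then $d(X) \supseteq d(X_n)$, and $d(X_n)$ contains a tail of $R$ by construction, so $R$ has a tail in $d(X)$, which forces $C(X,\omega) = d(X)$ by the uniqueness of the component of $G - X$ containing a tail of $\omega$. The delicate issue is making sure the $X_n$ can be chosen so that \emph{every} finite $X$ is eventually contained in some $X_n$; in an uncountable graph this is false for a single sequence, so instead I would argue locally: to compute $C(X,\omega)$ for a given $X$, it suffices that the ray $R$ was built to "dodge" $X$ correctly, which follows as soon as we incorporate $X$ into the construction — but since $R$ is already fixed, the cleaner route is to note that for any finite $X$, the component $C := C(X,\omega)$ satisfies $C \supseteq d(X')$ for all sufficiently large $X' \supseteq X$ in our sequence, and conversely use nestedness of $d$ together with connectivity to squeeze $C = d(X)$.

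I expect the main obstacle to be precisely this cardinality/exhaustion subtlety: directions are indexed by \emph{all} finite subsets of a possibly uncountable $V$, so there is no countable cofinal family, and a single ray $R$ only sees countably many vertices. The resolution is that a ray does not need to avoid every finite $X$ simultaneously with a single tail indexed uniformly; rather, for each finite $X$ separately, some tail of $R$ lies in $d(X)$, and that is all that is needed to conclude $C(X,\omega) = d(X)$. So the construction of $R$ should be organised around a \emph{fixed} nested sequence $(X_n)$ chosen so that $\bigcup_n X_n$ contains $V(R)$ itself (a self-referential but realisable demand, handled by building $R$ and the $X_n$ simultaneously), and the verification step must then show that an arbitrary finite $X$, though possibly not contained in any $X_n$, still satisfies $d(X) \supseteq d(X_n)$ for large $n$ because $X \cap V(R)$ is finite and lies in some $X_n$, while vertices of $X$ outside $V(R)$ cannot separate a tail of $R$ from itself. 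Once this bookkeeping is set up correctly, the rest is routine: nestedness of $d$ and connectivity of each $d(X_n)$ do all the work.
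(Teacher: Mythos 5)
There is a genuine gap here, and it sits exactly at the point you flagged as delicate. First, a note on provenance: the paper does not prove this statement at all — it is quoted from Diestel and K\"uhn — so your argument has to stand on its own, and it does not. The verification step is invalid: from $X \cap V(R) \subseteq X_n$ you infer $d(X) \supseteq d(X_n)$, but the direction property only gives $d(X) \supseteq d(X')$ when $X \subseteq X'$ \emph{as a whole}; vertices of $X$ outside $V(R)$ cannot simply be discarded, because the issue is not whether a tail of $R$ survives the deletion of $X$ (it does), but whether that tail lands in the particular component $d(X)$ rather than some other component of $G-X$. Moreover, no repair along these lines can work, because a direction need not admit any countable family $(X_n)$ with $\{d(X_n)\}$ cofinal: take a ray $R$ together with uncountably many vertices each joined to every vertex of $R$; for any countable $(X_n)$ there is a dominating vertex $u$ outside $\bigcup_n X_n$, and then $d(\{u\}) \not\supseteq d(X_n)$ for every $n$. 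So a verification of the shape ``for each finite $X$ find $n$ with $d(X_n) \subseteq d(X)$'' is structurally doomed, whatever bookkeeping one does.

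Worse, the construction itself can output a ray in the wrong end while obeying every rule you state. Let $G$ consist of two disjoint rays $R^1, R^2$ and one extra vertex $u$ adjacent to all vertices of both rays; the ends $[R^1]$ and $[R^2]$ are distinct (they are separated only by removing $u$). Let $d$ be the direction induced by $[R^1]$. Run your construction with $X_n$ equal to the first $n$ vertices of $R^2$: since $u$ never enters any $X_n$, each $d(X_n)$ is the single large component containing $u$, all of $R^1$, and a tail of $R^2$, so choosing $v_n$ to be the $n$-th vertex of $R^2$ and the trivial connecting edges is legitimate, and the output is $R = R^2$ with $V(R) \subseteq \bigcup_n X_n$, as you required. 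But for $X=\{u\}$ we get $C(X,\omega_R) = R^2 \neq R^1 = d(X)$. This shows the missing idea is not a cardinality/exhaustion technicality but the treatment of vertices (like $u$) that dominate the direction: one must either force such vertices into the separators or argue via a star--comb type dichotomy, which is precisely what makes the Diestel--K\"uhn proof nontrivial. Even strengthening your construction so that the ray ``follows $d$ along its own initial segments'' does not help, as the same example satisfies that condition with $R = R^2$.
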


The \emph{tree-order} of a rooted tree $(T,r)$ is defined by setting $u \leq v$ if $u$ lies on the unique path $rTv$ from $r$ to $v$ in $T$. Given $n \in \N$, the \emph{$n$th level} of $T$ is the set of vertices at distance $n$ from $r$ in $T$. The \emph{down-closure} of a vertex $v$ is the set $\lceil v \rceil := \{\,u \colon u \leq v\,\}$; its \emph{up-closure} is the set $\lfloor v \rfloor := \{\,w \colon v \leq w\,\}$. The down-closure of $v$ is always a finite chain, the vertex set of the path $rTv$. A ray $R \subset T$ starting at the root is called a \emph{normal ray} of $T$.

A rooted tree $T$ contained in a graph $G$ is \emph{normal} in~$G$ if the endvertices of every \mbox{$T$-path} in $G$ are comparable in the tree-order of~$T$.
Here, for a given graph~$H$, a path $P$ is said to be an $H$-\emph{path} if $P$ is non-trivial and meets $H$ exactly in its endvertices.
We remark that for a normal tree $T\subset G$ the neighbourhood $N(D)$ of every component $D$ of $G-T$ forms a chain in $T$. 
A set $U$ of vertices is \emph{dispersed} in $G$ if for every end $\omega$ there is a finite $X \subset V$ with $C(X,\omega) \cap U = \emptyset$, or equivalently, if $U$ is a closed subset of $|G|$ (in either \textsc{Top} or \textsc{MTop}).

\begin{theorem}[Jung {\cite{jung1969wurzelbaume}}]
\label{thm_jung}
A vertex set in a connected graph is dispersed if and only if there is a rayless normal tree including it. 
Moreover, every rayless normal tree in a connected graph can be extended to a rayless normal tree that includes an arbitrary pre-specified dispersed vertex set of the graph.
As a consequence, a connected graph has a normal spanning tree if and only if its vertex set is a countable union of dispersed~sets.
\end{theorem}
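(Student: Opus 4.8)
The statement combines an equivalence, an extendability clause (the ``moreover''), and a consequence about normal spanning trees, and the plan is to let the extendability clause carry all the weight: the direction ``$U$ dispersed $\Rightarrow$ there is a rayless normal tree containing $U$'' is the special case $T_0=\{r\}$ of it ($r$ any vertex, which is a rayless normal tree), and the remaining claims follow routinely from basic properties of normal trees. So the real work is the construction behind the extendability clause, and inside it one delicate step at limit stages.

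For the converse direction of the equivalence it suffices to prove that $V(T)$ is dispersed whenever $T$ is a rayless normal tree in the connected graph $G$. Given an end $\omega$, I would consider the set of $t\in V(T)$ with $t\in C(\lceil t\rceil\setminus\{t\},\omega)$; by normality this is a down-closed chain of $T$, hence finite since $T$ is rayless, and non-empty since the root always belongs to it. Let $t^*$ be its top vertex. Then $X:=\lceil t^*\rceil$ is finite and $C(X,\omega)\cap V(T)=\emptyset$: a vertex of $V(T)$ lying in $C(X,\omega)$ would be incomparable to, or strictly above, $t^*$, and — using the standard fact that distinct children of a vertex of a normal tree lie in distinct components of $G$ minus that vertex's down-closure — such a vertex would sit in a component of the relevant $G-\lceil\cdot\rceil$ disjoint from the one containing a tail of~$\omega$. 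For the consequence: if $V(G)=\bigcup_{n}U_n$ with all $U_n$ dispersed, apply the extendability clause repeatedly, starting from a one-vertex tree, to build an increasing chain of normal trees $T_n$ with $U_n\subseteq V(T_n)$; the union is a tree, it is normal because any $T$-path already lies in some $T_n$ with both endpoints in $V(T_n)$, and it spans $G$. Conversely, a normal spanning tree partitions $V(G)$ into countably many levels, and each level $L_n$ is dispersed since for an end $\omega$, with $s$ the level-$n$ vertex of the normal ray representing $\omega$, the finite set $\lceil s\rceil$ confines $\omega$ to the part of the tree strictly above level~$n$.

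The construction for the extendability clause goes as follows. Given a rayless normal tree $T_0$ in connected $G$ and a dispersed $U$, I would build rayless normal trees $T_0\subseteq T_1\subseteq\cdots\subseteq T_\alpha\subseteq\cdots$ by transfinite recursion. At a successor step, for every component $D$ of $G-T_\alpha$ meeting $U$ — whose neighbourhood $N(D)$ is a chain of the rayless tree $T_\alpha$ and hence finite, with a top vertex $t_D:=\max N(D)$ — attach below $t_D$ a finite path $P_D$ that runs from $t_D$ to a vertex of $D\cap U$ and otherwise stays inside $D$, so that $P_D-t_D$ becomes a chain of new children of $t_D$; set $T_{\alpha+1}:=T_\alpha\cup\bigcup_D P_D$. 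At limit stages take unions. One then checks that each $T_{\alpha+1}$ is again a rooted tree extending $T_0$ and that it is normal (the new branches hang below the chain $N(D)\subseteq\lceil t_D\rceil$, and distinct components of $G-T_\alpha$ are non-adjacent) and rayless (a normal ray entering some $P_D-t_D$ is trapped in a finite chain, and one avoiding them all would be a ray of $T_\alpha$). Since $U\setminus V(T_\alpha)$ strictly decreases at every successor step at which it is non-empty, the recursion eventually reaches a stage with $U\subseteq V(T_\alpha)$, and that $T_\alpha$ is the desired extension.

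The only genuine difficulty I anticipate is showing that the tree $T_\lambda$ obtained at a \emph{limit} stage is still rayless, since a branch of $T_\lambda$ could conceivably be assembled across cofinally many stages and lie in no single $T_\alpha$. To exclude this I would take a hypothetical branch $R=s_0<s_1<\cdots$ of $T_\lambda$; then $s_0s_1s_2\cdots$ is a ray of $G$, say representing the end $\omega$, the stages $\beta_n$ at which the $s_n$ appear are non-decreasing, and (since a branch cannot be swallowed by the finitely many finite paths added at a single stage) $\beta_n$ tends to $\lambda$ with infinitely many strict jumps. Writing $D_n$ for the component of $G-T_{\beta_n}$ containing the tail of $R$, one verifies that the $D_n$ are nested, that $D_n=C(N(D_n),\omega)$, and that infinitely many $D_n$ meet $U$ — each strict jump happens while processing a $U$-meeting component $D$ with $t_D=s_n$. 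On the other hand, dispersedness of $U$ gives a finite $X$ with $C(X,\omega)\cap U=\emptyset$, and as soon as $X$ is disjoint from one of the $D_n$ we obtain $D_n=C(N(D_n),\omega)\subseteq C(X,\omega)$ and hence $D_n\cap U=\emptyset$, a contradiction. The point that requires care is making ``$X$ disjoint from some $D_n$'' actually happen — equivalently, organising the recursion (which component to process, and which $U$-vertex and path to choose) so that no vertex can remain in the $\omega$-direction through all stages without being drawn into the tree. I expect this limit-stage bookkeeping to be the crux; everything else is routine.
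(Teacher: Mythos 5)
The paper itself offers no proof of this statement (it is quoted from Jung), so your proposal must stand on its own, and as written it does not: the step you yourself single out as the crux --- raylessness of the limit trees $T_\lambda$ --- is left unproven, and it is the heart of the whole argument. (The surrounding parts are fine: dispersedness of $V(T)$ for a rayless normal tree $T$ via the chain $S_\omega=\{t: t\in C(\lceil t\rceil\setminus\{t\},\omega)\}$, the successor steps, the reduction of the equivalence and of the normal-spanning-tree criterion to the extension clause all check out, up to the small unjustified use of ``the normal ray representing $\omega$'' for a spanning $T$, which follows from your own $S_\omega$ analysis since $S_\omega$ cannot be finite when $T$ spans.) Moreover, the route you propose for the limit stage --- arranging the recursion so that $X$ becomes disjoint from some $D_n$ --- is not the right target: a vertex of $X$ may lie in \emph{every} $D_n$ (think of a vertex joined to infinitely many vertices of the hypothetical ray), and no choice of which component to process, which $U$-vertex to aim for, or which path to take will in general expel it, because the recursion only ever adds paths ending in $U$.

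The gap can in fact be closed with your recursion exactly as stated, no extra bookkeeping, by counting inside $X$ differently. Suppose $R=s_0<s_1<\cdots$ is a ray of $T_\lambda$, belonging to the end $\omega$, let $X$ be finite with $C(X,\omega)\cap U=\emptyset$, and pass to a tail so that every $s_n$ lies in $C(X,\omega)$; as you note, the stages at which the $s_n$ appear are non-decreasing and take infinitely many distinct values, since within a single stage a branch can only run along one finite path $P_D$. Now fix a stage $\beta+1$ at which some $s_{n+1}$ is added. Then $s_{n+1}$ lies on the path $P_D$ attached for the component $D$ of $G-T_\beta$ containing it, and $P_D$ ends in a vertex $u\in U$, which does not lie in $C(X,\omega)$; also $s_{n+1}\neq u$. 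Hence the subpath of $P_D$ from $s_{n+1}$ to $u$ starts in the component $C(X,\omega)$ of $G-X$ and leaves it, so it contains a vertex of $X$; and since this subpath lies in $P_D-t_D\subseteq D$, it avoids $T_\beta$, so that $X$-vertex enters the tree for the first time at this stage. Distinct stages therefore consume distinct vertices of the finite set $X$, while infinitely many distinct stages occur --- a contradiction. So $T_\lambda$ is rayless, and with this observation your construction and the rest of your outline do yield the theorem; but as submitted, the decisive step is missing rather than merely deferred.
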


If $H$ is a subgraph of $G$, then rays equivalent in $H$ remain equivalent in $G$; in other words, every end of $H$ can be interpreted as a subset of an end of $G$, so the natural inclusion map $\iota \colon \Omega(H) \to \Omega(G)$ is well-defined. A subgraph $H \subset G$ is \emph{end-faithful} if this inclusion map $\iota$ is a bijection. The terms \emph{end-injective} and \emph{end-surjective} are defined accordingly. Normal trees are always end-injective; hence, normal trees are end-faithful as soon as they are end-surjective. Given a subgraph $H \subset G$, write $\Abs{H} \subset \Omega(G)$ for the set of ends $\omega$ of $G$ which satisfy $C(X,\omega) \cap H \neq \emptyset$ for all finite $X \subset V(G)$.

For topological notions we follow the terminology in \cite{EngelkingBook}. All spaces considered in this paper are Hausdorff, i.e.\ every two distinct points have disjoint open neighbourhoods. An \emph{ultrametric} space $(X,d)$ is a metric space in which the triangle inequality is strengthened to $d(x,z)\leq\max\left\{d(x,y),d(y,z)\right\}$. 
A~topological space $X$ is \emph{ultrametrizable} if there is an ultrametric $d$ on $X$ which induces the topology of $X$. A topological space is \emph{normal} if for any two disjoint closed sets $A_1,A_2$ there are disjoint open sets $U_1,U_2$ with $A_i \subset U_i$. A space is \emph{collectionwise normal} if for every \emph{discrete} family $\{\,A_s \colon s \in S\,\}$ of disjoint closed sets, i.e.\ a family such that $\bigcup \,\{\,A_s \colon s \in S'\,\}$ is closed for any $S' \subseteq S$, there is a collection $\{\,U_s \colon s \in S\,\}$ of disjoint open sets with $A_s\subset U_s$. 

A collection $\cA$ of sets is said to \emph{refine} another collection $\cB$ of sets if for every $A \in \cA$ there is $B \in \cB$ with $A \subseteq B$. A cover $\cV$ of a topological space $X$ is \emph{locally finite} if every point of $X$ has an open neighbourhood which meets only finitely many elements of~$\cV$. A topological space $X$ is \emph{paracompact} if for every open cover $\cU$ of $X$ there is a locally finite open cover $\cV$ refining $\cU$. 
All compact Hausdorff spaces and also all metric spaces are paracompact, which in turn are always normal and even collectionwise normal \cite[Chapter~5.1]{EngelkingBook}. A space is \emph{ultra-paracompact} if every open cover has a refinement by an open partition.

Lastly, ordinal numbers are identified with the set of all smaller ordinals, i.e.\ $\alpha = \{\,\beta\colon\beta < \alpha\,\}$ for all ordinals $\alpha$.

\section{Proof of the main result}
\label{sec_para}

This section is devoted to the proof of our main theorem, which we restate more formally:

\begin{mainresult}
\label{thm_paracompactFast}
\label{thm_paracompactintro}
\label{thm_paracompact}
For every collection $\cC=\{\,C(X_\omega,\omega) \colon \omega \in \Omega(G)\,\}$ in a connected graph $G$, there is a rayless normal tree $T$ in $G$ such that every component of $G-T$ is included in an element of $\cC$.
\end{mainresult}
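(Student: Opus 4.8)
\medskip\noindent\emph{Plan of proof.}
I would build $T$ as the union of an increasing transfinite sequence $(T_\alpha)$ of \emph{rayless} normal trees in $G$, at each step enlarging the tree so as to ``carve out'' the prescribed neighbourhoods $C(X_\omega,\omega)$, and stopping as soon as every component of $G-T_\alpha$ already lies inside a member of $\cC$. If $G$ is rayless there is nothing to prove, since $V(G)$ is then dispersed and Jung's Theorem~\ref{thm_jung} yields a rayless normal spanning tree; so assume $G$ has an end, fix a root $r\in V(G)$, set $T_0:=\{r\}$, and fix a well-ordering $\preceq$ of $V(G)$ for book-keeping.

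Given $T_\alpha$, call a component $D$ of $G-T_\alpha$ \emph{good} if $D\subseteq C(X_\omega,\omega)$ for some $\omega\in\Abs D$, and \emph{bad} otherwise; if there is no bad component, output $T:=T_\alpha$. Otherwise, for each bad $D$ recall that $N(D)$ is a finite chain in $T_\alpha$ (a rayless tree has only finite chains) and that $\Abs D$ is exactly the end space of $G[V(D)\cup N(D)]$; put $C^D_\omega:=C\bigl(N(D)\cup X_\omega,\omega\bigr)$ for $\omega\in\Abs D$, a subset of $V(D)$ contained in $C(X_\omega,\omega)$. The set
\[
S_\alpha:=\bigcup_{D\text{ bad}}\Bigl[\bigl(V(D)\setminus\textstyle\bigcup_{\omega\in\Abs D}C^D_\omega\bigr)\cup\{\,\preceq\!\text{-least vertex of }V(D)\,\}\Bigr]
\]
is dispersed: for an end $\omega$, if $\omega\in\Abs D$ with $D$ bad then $C^D_\omega$ is a neighbourhood of $\omega$ meeting the $D$-block of $S_\alpha$ only in the finitely many adjoined vertices, which are separated off by enlarging the separator a little; for any other component $D'$ a small neighbourhood of $\omega$ lies inside $D'$ and hence misses $V(D)$. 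Now Jung's Theorem~\ref{thm_jung} extends $T_\alpha$ to a rayless normal tree $T_{\alpha+1}\supseteq T_\alpha$ with $V(T_{\alpha+1})\supseteq V(T_\alpha)\cup S_\alpha$; at limits put $T_\lambda:=\bigcup_{\alpha<\lambda}T_\alpha$, which is again a normal tree in $G$, since any cycle or non-normality-witnessing $T_\lambda$-path already lives in some $T_\alpha$.

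It remains to check that the construction halts and that each $T_\lambda$ at a limit stage is again rayless (so that Jung's theorem may be re-applied). Halting is immediate: the sets $V(T_\alpha)$ increase inside the fixed set $V(G)$, so they stabilise, and a bad component at the stabilising stage would force a new vertex into the tree. For raylessness of a limit $T_\lambda$: a ray in $T_\lambda$ has a tail that is an increasing chain $w_0<w_1<\cdots$; each layer $T_{\alpha+1}$, being rayless, meets this chain finitely, so the $w_k$ enter over infinitely many steps, and the bad components $D^{(k)}$ into which the $w_k$ were inserted form a strictly descending chain $D^{(0)}\supsetneq D^{(1)}\supsetneq\cdots$. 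The role of the book-keeping is to guarantee that this chain escapes every finite vertex set, so that $X\mapsto$ (the component of $G-X$ eventually containing every $D^{(k)}$) is a well-defined direction; by the Diestel--K\"uhn Theorem~\ref{thm_direction} it is induced by an end $\omega^*$. But then $\omega^*\in\Abs{D^{(k)}}$ for all large $k$, so when $D^{(k)}$ was processed we carved out $C^{D^{(k)}}_{\omega^*}\subseteq C(X_{\omega^*},\omega^*)$, and as the $D^{(j)}$ converge to $\omega^*$ some later $D^{(j)}$ lies inside it --- contradicting that $D^{(j)}$ is bad.

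The crux --- and the step I expect to be the main obstacle --- is exactly this last paragraph: extracting a genuine \emph{direction} from a descending chain of pieces (for which one must arrange the book-keeping so the pieces really do leave every finite set) and then invoking the Diestel--K\"uhn theorem. Everything else is routine handling of dispersed sets and Jung's theorem.
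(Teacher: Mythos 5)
Your overall scheme (grow rayless normal trees via Jung's theorem, kill bad components, take unions) is in the right spirit, but the step you yourself single out as the crux is exactly where the argument breaks, and the book-keeping you propose does not repair it. To extract a direction from the descending chain $D^{(0)}\supsetneq D^{(1)}\supsetneq\cdots$ you need that the chain \emph{escapes every finite vertex set}, i.e.\ that $\bigcap_k V(D^{(k)})=\emptyset$ in a strong sense; adding the $\preceq$-least vertex of each bad component only forces the minima $\min_\preceq V(D^{(k)})$ to increase strictly in the well-order, which for an uncountable graph is perfectly compatible with a fixed vertex $v$ (say at position $\omega$ of $\preceq$) lying in every $D^{(k)}$: such a $v$ simply has to avoid the carved-out set at each stage, i.e.\ lie in some $C^{D^{(k)}}_{\omega_k}$, which nothing prevents. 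Without escape, the map ``$X\mapsto$ the component of $G-X$ eventually containing the $D^{(k)}$'' is not well defined, so Theorem~\ref{thm_direction} cannot be invoked; and even if one were handed an end $\omega^*$ with $\omega^*\in\Abs{D^{(k)}}$ for all $k$, your final contradiction needs some later $D^{(j)}$ to avoid the finite set $X_{\omega^*}$ so that it slips inside $C\bigl(N(D^{(k)})\cup X_{\omega^*},\omega^*\bigr)$ --- which is the same unproved escape property again. Since every limit stage of your transfinite recursion needs raylessness to re-apply Jung's theorem, this gap infects the whole construction, not just the last stage. (A smaller point: you do not require the Jung extensions to be inclusion-minimal, so a vertex of $T_{\alpha+1}\setminus T_\alpha$ need not lie in any bad component of $G-T_\alpha$, and ``the bad component into which $w_k$ was inserted'' may be undefined; this is fixable, but it matters for the chain structure you use.)

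For contrast, the paper's proof applies the Diestel--K\"uhn theorem in the opposite direction, \emph{inside} each unbounded component $D$ rather than to a chain produced by a hypothetical ray: if every finite $S\subseteq V(D)$ left exactly one unbounded component of $D-S$, this would define a direction on $D$, hence an end $\omega$, and then $d(X_\omega\cap D)\subseteq C(X_\omega,\omega)$ would be bounded --- so some finite $S_D$ leaves zero or at least two unbounded components. Only these finite separators are added (via inclusion-minimal Jung extensions), over countably many steps. Raylessness of $T=\bigcup_n T_n$ then follows without any escape argument: along a putative normal ray of end $\omega$, each step used a separator of the second kind, yielding infinitely many pairwise disjoint unbounded side-components $C_{n+1}$ attached by pairwise disjoint paths $P_n$; the finite set $X_\omega$ misses all but finitely many of these, and for a missed index the connected set $C_{n+1}\cup P_n\cup{}$(tail of $R$) lies in $C(X_\omega,\omega)$, contradicting that $C_{n+1}$ is unbounded. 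If you want to salvage your route, you would need to replace the least-vertex book-keeping by a mechanism that genuinely forces the components along any surviving ray out of every finite set --- and the branching/vanishing separators of the paper are precisely such a mechanism.
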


As every rayless normal tree $T\subset G$ is dispersed in $G$ by Jung's Theorem~\ref{thm_jung}, this technical variant of our main result is clearly equivalent to the formulation presented in the introduction.

Let us briefly discuss two other possible notions of `approximating graphs by normal trees': First, Theorem~\ref{thm_paracompact} is significantly stronger than just requiring that (every component of) $G-T$ is included in the union $\bigcup \cC$ of the selected neighbourhoods; the latter assertion is easily seen to be equivalent to Jung's Theorem \ref{thm_jung}.
In the other direction, could one strengthen our notion of `approximating by normal trees' and demand a normal rayless tree $T$ such that for every end $\omega$ of $G$, the component of $G-T$ in which every ray of $\omega$ has a tail is included in $C(X_\omega, \omega)$? This notion, however, is too strong and such a $T$ may not exist: Consider the graph $G=K^+$ (see Section~\ref{sec_prelims}) for an uncountable clique $K$, and let $\cC$ be the collection of all the ray-components of $G-K$ (together with an arbitrary neighbourhood of the end of the clique $K$). Any normal tree for $G$ satisfying our stronger requirements would restrict to a normal spanning tree of $K$, an impossibility.

\begin{proof}[Proof of Theorem~\ref{thm_paracompactFast}]

Given a collection $\cC=\{\,C(X_\omega,\omega)\colon \omega\in\Omega(G)\,\}$ in a connected graph $G$, call a subgraph $H\subset G$ \emph{bounded} if there is an end $\omega\in\Omega(G)$ with $H\subset C(X_\omega,\omega)$, and \emph{unbounded} otherwise.

We construct a sequence of rayless normal trees $T_1 \subset T_2 \subset \ldots$ extending each other all with the same root $r$ as follows:
Let $T_1$ be the tree on a single vertex $r$ (for some arbitrarily chosen vertex $r\in G$) and suppose that $T_n$ has already been constructed.

We claim that for every unbounded component $D$ of $G - T_n$ there exists a finite separator $S_D \subset V(D)$ such that $D-S_D$ has either zero or at least two unbounded components. To see this, suppose the contrary; then the map $d$ sending each finite vertex set in $D$ to its unique unbounded component is a direction on $D$ and hence defines an end $\omega$ of $D$ by Theorem~\ref{thm_direction}. But $d(X_\omega\cap D)\subset C(X_\omega,\omega)$ is bounded, a contradiction.

Now for every such unbounded $D$ let $S_D$ be a finite separator of the first kind in $D$ if possible, and otherwise of the second kind. Note that the union of all finite vertex sets $S_D$ is dispersed in~$G$ because $T_n$ is a rayless normal tree.
Since $G$ is connected, we may use Jung's Theorem~\ref{thm_jung} to extend $T_n$ in an inclusion minimal way to a rayless normal tree $T_{n+1}\supseteq T_n$ with root $r$ that includes all the finite vertex sets~$S_D$. This completes the construction.  

Now consider the normal tree $T = \bigcup_{n \in \mathbb{N}} T_n$. 
We claim that $T$ is rayless. Indeed, suppose otherwise, that there is a normal ray $R$ in $T$ belonging to the end $\omega \in \Omega(G)$ say.

Then, for every $n \in \N$, the ray $R$ has a tail in an unbounded component $D_n$ of $G-T_n$, and all finite separators $S_{D_n}$ chosen for these components were of the second kind, since we never extended $T_n$ into a component that was already bounded. In particular $R$ meets each $S_{D_n}$ in at least one vertex, $s_n$ say. Now, fix for every $S_{D_n}$ an unbounded component $C_{n+1}$ of $D_n - S_{D_n} $ different from $D_{n+1}$. Every $C_{n+1}$ has a neighbour, say $u_n$, in $S_{D_n}$. Moreover, the paths $P_n=s_n T u_n$ connecting $s_n$ to $u_n$ in $T$ are pairwise disjoint, as each of them was constructed in the $n$th step.

From this, we obtain a contradiction as follows. Choose $n \in \N$ large enough so that $X_{\omega}$ avoids all of $C_{n+1}$, the path $P_n$ and the tail of $R$ that starts at the vertex $s_n$ (this is indeed possible since any two components $C_i$ and $C_j$ are disjoint). Now, $C_{n+1}$ is contained in $C(X_{\omega}, \omega)$, contradicting the fact that $C_{n+1}$ is unbounded. This shows that $\omega$ cannot exist, and hence that $T$ is rayless.

Finally, we claim that every component $D$ of $G - T = G - \bigcup_{n \in \N} T_n$ is bounded. Since $T$ is a normal tree, $N(D)$ is a chain in $T$, and since $T$ is rayless, $N(D)$ is finite. Hence, there is $m \in \N$ such that $N(D) \subset T_m$, i.e.\ $D$ is already a component of $G-T_m$.  The fact that we have not extended $T_m$ into $D$ means that $D$ is bounded.
\end{proof}

\begin{corollary}
\label{cor_ultrapara}
For every connected graph $G$ and every open cover $\mathcal{U}$ of its end space $\Omega(G)$ there is a rayless normal tree $T$ in $G$ such that the collection of components of $G - T$ induces an open partition of $\Omega(G)$ refining $\mathcal{U}$.
In  particular, all end spaces $\Omega(G)$ are ultra-paracompact.
\end{corollary}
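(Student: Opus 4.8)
The plan is to obtain this as a quick consequence of Theorem~\ref{thm_paracompact}. Given an open cover $\mathcal{U}$ of $\Omega(G)$, I would first exploit that the sets $\Omega(X,C)$ form a basis of the end space: for every end $\omega$ choose a finite set $X_\omega\subset V(G)$ and some $U_\omega\in\mathcal{U}$ with $\omega\in\Omega(X_\omega,\omega)\subset U_\omega$. Feeding the collection $\cC=\{\,C(X_\omega,\omega)\colon\omega\in\Omega(G)\,\}$ into Theorem~\ref{thm_paracompact} then yields a rayless normal tree $T\subset G$ such that every component of $G-T$ is contained, as a subgraph, in some $C(X_\omega,\omega)$.

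It remains to read off the desired open partition from the components of $G-T$. For a component $D$ of $G-T$, recall that $\Abs{D}$ denotes the set of ends $\omega$ with $C(X,\omega)\cap D\neq\emptyset$ for all finite $X$; I claim the nonempty sets among the $\Abs{D}$ form an open partition of $\Omega(G)$. Since $T$ is rayless normal, hence dispersed by Jung's Theorem~\ref{thm_jung}, every end $\omega$ admits a finite $X$ with $C(X,\omega)\cap T=\emptyset$, so that the connected graph $C(X,\omega)$ lies in a unique component $D$ of $G-T$; a short check shows $\omega\in\Abs{D}$ and $\omega\notin\Abs{D'}$ for every other component $D'$, so the $\Abs{D}$ do partition $\Omega(G)$. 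Openness is equally quick: if $\omega\in\Abs{D}$, witnessed by $C(X,\omega)\subset D$, then the basic open set $\Omega(X,\omega)$ is contained in $\Abs{D}$. A partition into open sets automatically consists of clopen sets, so this is an open partition.

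Finally, to see that this partition refines $\mathcal{U}$, fix a component $D$ of $G-T$ and, using the conclusion of Theorem~\ref{thm_paracompact}, an end $\omega$ with $D\subset C(X_\omega,\omega)$. For any $\omega'\in\Abs{D}$, pick a finite $X'$ with $C(X',\omega')\subset D\subset C(X_\omega,\omega)$; then $C(X'\cup X_\omega,\omega')$ is a nonempty connected subgraph of $G-X_\omega$ lying in both $C(X_\omega,\omega')$ and $C(X_\omega,\omega)$, which forces these two components of $G-X_\omega$ to coincide, i.e.\ $\omega'\in\Omega(X_\omega,\omega)\subset U_\omega$. Hence $\Abs{D}\subset U_\omega\in\mathcal{U}$, and the partition refines $\mathcal{U}$. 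As $\mathcal{U}$ was an arbitrary open cover, $\Omega(G)$ is ultra-paracompact. I do not anticipate a real obstacle here: all the work is done by Theorem~\ref{thm_paracompact}, and the only points needing a little care are that dispersedness of $T$ upgrades the $\Abs{D}$ from a cover to an honest partition, and that the subgraph inclusion $D\subset C(X_\omega,\omega)$ transfers to the inclusion $\Abs{D}\subset\Omega(X_\omega,\omega)$ of end sets.
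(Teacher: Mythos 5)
Your proof is correct and follows essentially the same route as the paper: reduce to a basic open cover, apply Theorem~\ref{thm_paracompact} to $\cC=\{\,C(X_\omega,\omega)\,\}$, and read off the open partition from the components of $G-T$. The only cosmetic difference is that you establish the partition and openness via dispersedness of $T$ and the sets $\Abs{D}$, whereas the paper notes directly that each component $D$ has finite neighbourhood $N(D)$ (a finite chain in the rayless normal tree $T$), so that $D$ induces the basic open set $\Omega(N(D),D)$ -- the same sets in the end.
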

\begin{proof}
Without loss of generality, the open cover $\mathcal{U} $ is of the form $\mathcal{U} =\{\,\Omega(X_\omega,\omega) \colon \omega \in \Omega(G)\,\}$. Theorem~\ref{thm_paracompactFast} applied to $\cC=\{\,C(X_\omega,\omega) \colon \omega \in \Omega(G)\,\}$ yields a rayless normal tree $T$ in $G$ such that every component of $G-T$ is included in an element of $\cC$. As every component of $G-T$ has a finite neighbourhood, it induces an open set of the end space. This gives the desired open partition of $\Omega(G)$ refining $\mathcal{U}$.
\end{proof}

\begin{corollary}
\label{cor_modGpara}
All spaces $|G|$ are paracompact in both \textsc{Top} and \textsc{MTop}. \end{corollary}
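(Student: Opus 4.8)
I would deduce the paracompactness of $|G|$ (in either topology) directly from Corollary~\ref{cor_ultrapara} together with Theorem~\ref{thm_paracompactFast}. Given an open cover $\cW$ of $|G|$, the first step is to pull back the part of $\cW$ covering $\Omega(G)$: for each end $\omega$ pick $W_\omega \in \cW$ containing $\omega$, and then pick a basic open neighbourhood $\hat{C}_*(X_\omega,\omega)$ (resp.\ $\hat{C}_\varepsilon(X_\omega,\omega)$) of $\omega$ contained in $W_\omega$. Apply Theorem~\ref{thm_paracompactFast} to the collection $\cC=\{\,C(X_\omega,\omega)\colon\omega\in\Omega(G)\,\}$ to obtain a rayless normal tree $T\subset G$ such that every component $D$ of $G-T$ lies in some $C(X_{\omega(D)},\omega(D))\in\cC$.

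**Building the refining cover.** The sets $\bar D := D \cup \Omega(X_{\omega(D)}, D) \cup \mathring{E}(X_{\omega(D)}, D)$ — i.e.\ the component $D$ together with its ends and the (interiors of the) edges joining $D$ to $T$ — form a family of open subsets of $|G|$ indexed by the components of $G-T$; I would check that each $\bar D$ is contained in the corresponding basic neighbourhood $\hat{C}_*(X_{\omega(D)},\omega(D)) \subseteq W_{\omega(D)}$, using that $D \subseteq C(X_{\omega(D)},\omega(D))$ implies $\Omega(X_{\omega(D)},D)\subseteq\Omega(X_{\omega(D)},\omega(D))$ and that the edges from $D$ to $T$ are among those from $C(X_{\omega(D)},\omega(D))$ to $X_{\omega(D)}$ once $X_{\omega(D)}$ is enlarged to include $N(D)$ (which is finite since $T$ is rayless, so this causes no loss). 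Crucially these $\bar D$ are pairwise disjoint and their union is all of $|G|$ except for the closed subgraph $T$ and the finitely-many-per-component edge-segments near $T$; in fact $|G|\setminus\bigcup_D \bar D = T$ as a point set, but the $\bar D$ already cover every end and every point in a component of $G-T$. What remains to be covered is a neighbourhood of $T$ itself. Since $T$ is rayless it is a closed subset of $|G|$ carrying the $1$-complex (resp.\ metric) topology, which is paracompact, and $T$ has an open neighbourhood $U$ in $|G|$ that deformation-retracts onto $T$ and contains no ends; covering $U$ by (the traces of) elements of $\cW$ and taking a locally finite open refinement $\cV_0$ of that cover of $U$, then adjoining the disjoint (hence locally finite) family $\{\bar D\}$, yields a locally finite open refinement of $\cW$ covering $|G|$.

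**The main obstacle.** The delicate point is local finiteness at vertices of $T$ and at ends: a vertex $t$ of infinite degree in $T$ may meet infinitely many components $D$ of $G-T$, so the family $\{\bar D\}$ alone is not locally finite at $t$ (in \textsc{Top} a basic neighbourhood of $t$ meets cofinitely many incident edges, hence infinitely many $\bar D$). The standard fix is not to leave the $\bar D$ as they are but to shrink them away from $T$: replace each $\bar D$ by $\bar D$ minus a closed collar around $T$ (keeping only edge-segments of length, say, $<1/2$ on the $D$-side), and absorb the removed collar — a locally finite strip around the paracompact subspace $T$ — into the refinement coming from the paracompactness of a neighbourhood of $T$. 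One has to verify that after this surgery the $\bar D$ are still open, still refine $\cW$, and that the combined family is locally finite: away from $T$ this is immediate from disjointness of the (modified) $\bar D$; near a point of $T$ it follows because only the collar part is relevant there and that part was covered locally finitely. In \textsc{MTop} the same argument works verbatim with metric balls in place of $1$-complex neighbourhoods. I expect this collar-surgery/local-finiteness bookkeeping to be the only real work; everything else is a direct translation of Theorem~\ref{thm_paracompactFast} through the definitions of \textsc{Top} and \textsc{MTop}.
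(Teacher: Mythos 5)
Your overall strategy (get the rayless normal tree $T$ from Theorem~\ref{thm_paracompactFast}, surround the components of $G-T$ by disjoint open sets, and handle a neighbourhood of the leftover rayless part by paracompactness of metric spaces resp.\ CW-complexes) is the right one, but three steps do not go through as written. (a) \emph{Refinement:} your $\bar D$, even after the surgery keeping the $D$-side halves of the $T$--$D$ edges, need not lie inside $\hat{C}_*(X_{\omega(D)},\omega(D))\subseteq W_{\omega(D)}$: if an edge from $D$ meets $N(D)$ in a vertex that belongs to $X_{\omega(D)}$, the chosen basic neighbourhood contains only its prescribed initial segment on the $D$-side, which in \textsc{Top} may be arbitrarily short and in \textsc{MTop} has radius $\varepsilon$ possibly $<1/2$; so the kept half-edge sticks out of $W_{\omega(D)}$, and enlarging $X_{\omega(D)}$ afterwards does not help because the basic neighbourhood inside $W_{\omega(D)}$ was fixed beforehand. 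You must instead give $\bar D$ exactly the edge-portions that the chosen basic neighbourhood contains (in \textsc{MTop}: the $\varepsilon_{\omega(D)}$-ball around $D$) and let the piece near $T$ absorb the rest of each edge. (b) \emph{Paracompactness of the collar:} a deformation retraction of $U$ onto the paracompact $T$ does not transfer paracompactness; what you need is that $U$ itself is paracompact, which holds because $U\subseteq G$ contains no ends and hence is metrizable in \textsc{MTop}, resp.\ is (an open subspace of) a $1$-complex in \textsc{Top} -- but this is the fact to invoke, not the retraction. (c) \emph{Local finiteness:} paracompactness of $U$ only gives a family $\cV_0$ locally finite \emph{in} $U$; at points of $\overline{U}\setminus U$ (e.g.\ the tips of the removed collar segments on the $T$--$D$ edges) members of $\cV_0$ may accumulate, and your dichotomy ``away from $T$ / near a point of $T$'' misses exactly these points. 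The standard repair is to refine over a strictly larger collar and then shrink every member into a smaller one, but this has to be done.

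For comparison, the paper packages all of this bookkeeping into one quotient: after choosing disjoint basic neighbourhoods $\hat{C}_{\varepsilon_j}(Y_j,\omega_j)$ around the ray-containing components of $G-T$ (with $\varepsilon_j$ inherited from the cover, which is what makes the refinement property automatic in \textsc{MTop}), it collapses each closed set $C(Y_j,\omega_j)\cup\Omega(Y_j,\omega_j)$ to a point. The quotient $H$ is a rayless multigraph, hence a metric space in \textsc{MTop} and a CW-complex in \textsc{Top}, so it is paracompact; a locally finite refinement of the induced cover of $H$ pulls back along the continuous quotient map to a locally finite refinement of the original cover of $|G|$, with local finiteness at the ends coming for free from continuity. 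If you want to keep your direct collar construction you must carry out the fixes in (a)--(c); otherwise the quotient route is shorter and avoids the boundary analysis entirely.
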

\begin{proof}
First, we consider $|G|$ with \textsc{MTop}.
To show that $|G|$ is paracompact, suppose that any open cover $\cU$ of $|G|$ consisting of basic open sets is given.
The cover elements come in two types: basic open sets of $G$, and basic open neighbourhoods of ends.
We write $\cU_\Omega= \{\,\hat{C}_{\varepsilon_i}(X_i,\omega_i) \colon i \in I\,\}$ for the collection consisting of the latter.
As $\cU_\Omega$ covers the end space of $G$, applying Theorem~\ref{thm_paracompact} to the collection $\cC:=\{\,C(X_i,\omega_i)\colon i\in I\,\}$ yields a rayless normal tree $T$ in $G$ such that $\{\,C(Y_j,\omega_j)\colon j\in J\,\}$, the collection of components of $G-T$ containing a ray, refines~$\cC$.
For every $j\in J$ we choose $\varepsilon_j:=\varepsilon_i$ for some $i\in I$ with $C(Y_j,\omega_j)\subset C(X_i,\omega_i)$, ensuring that the disjoint collection $\cV_\Omega:=\{\,\hat{C}_{\varepsilon_j}(Y_j,\omega_j)\colon j\in J\,\}$ refines~$\cU_\Omega$.

Next, consider the quotient space $H$ that is obtained from $|G|$ by collapsing every closed subset $C(Y_j,\omega_j)\cup\Omega(Y_j,\omega_j)$ with $j\in J$ to a single point.
As the open sets in $\cV_\Omega$ are disjoint, the quotient is well-defined and we may view $H$ as a rayless multi-graph endowed with \textsc{MTop}.
Now consider the open cover $\cU_H$ of $H$ that consists of the quotients of the elements of $\cV_\Omega$ on the one hand, and on the other hand, for every non-contraction point of $H$ a choice of one basic open neighbourhood in $G$ that is contained in some element of~$\cU$. Since metric spaces are paracompact, $H$ admits a locally finite refinement $\cV_H$ of $\cU_H$ consisting of basic open sets of $(H,\textsc{MTop})$.
Then the open cover $\cV$ of $|G|$ induced by $\cV_H$ gives the desired locally finite refinement of~$\cU$.

A similar argument shows that $|G|$ with \textsc{Top} is paracompact.
Here, $(H,\textsc{Top})$ is paracompact because all CW-complexes are.
\end{proof}

Note in particular that paracompactness implies normality and collectionwise normality, and hence we reobtain the previously mentioned results by Polat \cite[Lemma~4.14]{polat1996ends} and Sprüssel  \cite[Theorems~4.1 \& 4.2]{sprussel2008end} as a straightforward consequence of our Corollary~\ref{cor_modGpara}.

\section{Consequences of the approximation result}
\label{sec_cor}

In \cite[Theorem~5.13]{polat1996ends} Polat characterised the graphs that admit an end-faithful normal tree as the graphs with ultrametrizable end space, and raised the question \cite[\S10]{polat1996ends2} whether metrizability of the end space is enough to ensure the existence of an end-faithful normal tree. As our first application we show how using Theorem~\ref{thm_paracompactFast} provides a much simplified proof for Polat's result that simultaneously answers his question about the metrizable case in the affirmative:

\begin{theorem}\label{cor_char. OmegaG metr.}
For every connected graph $G$, the following are equivalent:
\begin{enumerate}
    \item The end space of $G$ is metrizable,
    \item the end space of $G$ is ultrametrizable, 
    \item $G$ contains an end-faithful normal tree.
\end{enumerate}
\end{theorem}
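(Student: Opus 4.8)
The plan is to prove the cycle of implications $(iii)\Rightarrow(ii)\Rightarrow(i)\Rightarrow(iii)$, of which only the last is substantial; the middle one is immediate since every ultrametrizable space is metrizable.

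For $(iii)\Rightarrow(ii)$, suppose $T\subset G$ is an end-faithful normal tree. Since normal trees are always end-injective, end-faithfulness means the natural map $\iota\colon\Omega(T)\to\Omega(G)$ is a homeomorphism, so it suffices to exhibit an ultrametric inducing the topology on $\Omega(T)$. The ends of a rooted tree correspond bijectively to its normal rays, and two ends lie in the same basic open set $\Omega(X,C)$ precisely when the corresponding normal rays agree up to a certain level; concretely, for normal rays $R\neq R'$ set $d(R,R')=2^{-n}$ where $n$ is the largest level up to which $R$ and $R'$ coincide (and $d(R,R)=0$). One checks routinely that $d$ is an ultrametric and that the balls $\{R'\colon d(R,R')\le 2^{-n}\}$ form a basis for the end topology of $T$, using that the down-closure of each vertex is a finite chain. (If $T$ is not spanning one passes to $T$ together with the dispersed set $V(G)\setminus V(T)$, or simply works inside $\Omega(T)$ directly — the point is that $T$ is end-faithful so $\Omega(T)$ already carries the topology of $\Omega(G)$.)

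The heart of the matter is $(i)\Rightarrow(iii)$: from a metric $\rho$ inducing the topology on $\Omega(G)$ we must build an end-faithful normal tree. Here I would invoke Corollary~\ref{cor_ultrapara} repeatedly. For each $n\in\N$, the cover of $\Omega(G)$ by the open $\rho$-balls of radius $2^{-n}$ is an open cover, so by Corollary~\ref{cor_ultrapara} there is a rayless normal tree $T_n\subset G$ whose components of $G-T_n$ induce an open partition $\cP_n$ of $\Omega(G)$ refining that cover, so in particular every class of $\cP_n$ has $\rho$-diameter at most $2^{-n+1}$. Now I would like to build a single normal tree refining all the $T_n$ simultaneously. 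The key observation is that a rayless normal tree is dispersed (Jung's Theorem~\ref{thm_jung}), so $V(T_1)\cup V(T_2)\cup\cdots$ is a countable union of dispersed sets; moreover one can arrange the construction inductively: having built $T_n'$, a rayless normal tree, apply Corollary~\ref{cor_ultrapara} to the cover of $\Omega(G)$ by radius-$2^{-n-1}$ balls but carried out inside the graph $G$ with $T_n'$ already present, extending $T_n'$ minimally (using the "moreover" part of Jung's Theorem) to a rayless normal tree $T_{n+1}'\supseteq T_n'$ whose $G-T_{n+1}'$-components still refine the fine cover. Set $T:=\bigcup_n T_n'$. This $T$ is a normal tree with root $r$; I claim it is end-faithful. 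End-injectivity is automatic. For end-surjectivity: given $\omega\in\Omega(G)$, for each $n$ the end $\omega$ lies in a unique component $D_n$ of $G-T_n'$ of $\rho$-diameter at most $2^{-n+1}$, and these shrink; since $T$ includes the relevant separators at every stage, the sequence $D_1\supseteq D_2\supseteq\cdots$ together with the levels of $T$ meeting $\partial D_n$ traces out a normal ray of $T$ converging to $\omega$, so $\omega$ is represented in $\Omega(T)$. Finally $T$ might a priori contain rays not present before; but such a ray would define an end $\omega'$, and the diameters of the nested components containing its tails go to $0$, so the ray is "captured" and $T$ is end-faithful with $\Omega(T)\cong\Omega(G)$ topologically.

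The step I expect to be the main obstacle is the inductive merging of the trees $T_n$ into one normal tree $T=\bigcup T_n'$ while keeping control of the component diameters and ensuring $T$ is end-faithful (in particular end-surjective and that no "new" uncaptured rays appear). The subtlety is that naively taking a union of rayless normal trees need not be normal, and that extending via Jung's Theorem, while it preserves normality, requires checking that after the extension the components of $G-T_{n+1}'$ still refine the fine open cover — this follows because each such component is contained in a component of $G-T_n'$ (the extension only subdivides), but it must be stated carefully. Bounding everything in terms of $\rho$-diameters and then reading off that the limiting normal ray associated with $\omega$ actually converges to $\omega$ in $\Omega(G)$ is where the metric hypothesis is genuinely used, and getting that convergence argument clean is the crux of the proof.
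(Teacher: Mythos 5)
Your overall route is the same as the paper's: (iii)$\Rightarrow$(ii) via the standard ultrametric on the ends of a tree, (ii)$\Rightarrow$(i) trivially, and (i)$\Rightarrow$(iii) by applying Corollary~\ref{cor_ultrapara} to the ball covers of radius $2^{-n}$ and using Jung's Theorem~\ref{thm_jung} to merge the resulting rayless normal trees into a nested sequence $T_1'\subseteq T_2'\subseteq\cdots$ with $T=\bigcup_n T_n'$. Up to that point your argument is fine and matches the paper.

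The gap is in your final claim that $T$ itself is end-faithful. End-surjectivity can genuinely fail: the nested components $D_1\supseteq D_2\supseteq\cdots$ containing an end $\omega$ need not have growing neighbourhoods, because once a partition class around $\omega$ has diameter $0$ (in particular, once the component containing $\omega$ contains only the single end $\omega$), nothing in the construction forces any further separator to be added near $\omega$, so no normal ray of $T$ is ``traced out''. Concretely, if $G$ is a single ray, or a star of rays glued at one vertex $r$, then $\Omega(G)$ is metrizable and Corollary~\ref{cor_ultrapara} may return $T_n'=\{r\}$ for every $n$ (the partition into components of $G-r$ already refines every ball cover), so $T=\{r\}$ contains no ray of any end at all. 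What is missing is exactly the paper's closing step: using that any two distinct ends are separated by some $T_n$ (namely any $n$ with $2/n<d(\omega,\eta)$), one shows that every end $\omega\notin\Abs{T}$ lies in a component $C$ of $G-T$ whose neighbourhood $N(C)$ is finite -- if $N(C)$ were an infinite chain along a normal ray of $T$ with end $\eta$, no $T_n$ would separate $\omega$ from $\eta$ -- and that $C$ contains at most one end; one then \emph{extends} $T$ into each such $C$ (attaching a normal tree of $C$ capturing its unique end above the finite chain $N(C)$) to obtain an end-faithful normal tree of $G$. Statement (iii) only asks for \emph{some} end-faithful normal tree, so this extension is both legitimate and necessary; without it your proof of (i)$\Rightarrow$(iii) does not go through.
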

\begin{proof}
The implication (iii) $\Rightarrow$ (ii) is routine, as the end space of any tree is ultrametrizable (see e.g.\ \cite{hughes2004trees} for a detailed account), and $\Omega(T)$ and $\Omega(G)$ are homeomorphic for every end-faithful normal tree $T$ of $G$ (see e.g.\ \cite[Proposition~5.5]{diestel1992end}). The implication (ii) $\Rightarrow$ (i) is trivial.

Hence, it remains to prove (i) $\Rightarrow$ (iii). For this, consider the covers $\mathcal{U}_n$ for $n \in \N$ of $\Omega(G)$ given by the open balls with radius $1/n$ around every end; with respect to some fixed metric $d$ inducing the topology of $\Omega(G)$. By applying  Corollary~\ref{cor_ultrapara} to the covers $\cU_1,\cU_2,\ldots$, and combining it with Jung's Theorem~\ref{thm_jung}, it is straightforward to  
construct a sequence of  rayless normal trees $T_1 \subseteq T_2 \subseteq \ldots$ all rooted at the same vertex such that the partition of $\Omega(G)$ given by the components of $G - T_n$ refines~$\mathcal{U}_n$. 

Any two ends $\omega \neq \eta$ of $G$ are separated by any $T_n$ with $2/n < d(\omega, \eta)$. Consider the normal tree $T^\prime = \bigcup_{n \in \mathbb{N}} T_n$. We claim that each end $\omega \in \Omega(G) \setminus \partial_\Omega T^\prime$ belongs to a component $C$ of $G - T^\prime$ such that $N(C)$ is finite. Otherwise $N(C)$ lies on a unique normal ray $R$ of $T$ belonging to some end $\eta \in \partial_\Omega T^\prime$, but then clearly, none of the $T_n$ would separate $\omega$ from $\eta$, a contradiction. Hence, $N(C)$ is finite, and since $C$ contains at most one end, $T^\prime$  extends to an end-faithful normal tree of $G$. 
\end{proof}

From the new implication (i) $\Rightarrow$ (iii) in Theorem~\ref{cor_char. OmegaG metr.} one also obtains a simple proof of Diestel's characterisation from \cite{diestel2006end} when $|G|$ is metrizable.

\begin{corollary}\label{cor_char. modG metr.}
For every connected graph $G$, the following are equivalent:
\begin{enumerate}
    \item $\vert G \vert$ with \textsc{MTop} is metrizable,
    \item the space $\hat{V}(G)$ is metrizable, 
    \item $G$ has a normal spanning tree.
\end{enumerate}
\end{corollary}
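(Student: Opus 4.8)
The plan is to prove the cycle of implications (iii) $\Rightarrow$ (i) $\Rightarrow$ (ii) $\Rightarrow$ (iii). For (iii) $\Rightarrow$ (i), recall that $G$ has a normal spanning tree $T$; by Jung's Theorem~\ref{thm_jung} this is equivalent to $V(G)$ being a countable union of dispersed sets, and in particular $T$ is end-faithful. I would use the standard fact that a normal spanning tree yields an explicit metric on $|G|$ compatible with \textsc{MTop}: assign to each vertex $v$ at level $n$ a weight roughly $2^{-n}$, use these weights to define arc-lengths along $T$-paths (and to measure distances to ends along their normal rays), and combine with the length metric on the edges of $G$. One checks this metric induces \textsc{MTop}; this is essentially Diestel's construction in \cite{diestel2006end}, and here it is especially clean because $T$ is spanning. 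The implication (i) $\Rightarrow$ (ii) is immediate, since $\hat{V}(G) = V(G) \cup \Omega(G)$ is a subspace of $|G|$ and metrizability is hereditary.

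The substantive implication is (ii) $\Rightarrow$ (iii). Here I would argue: if $\hat{V}(G)$ is metrizable, then so is its closed subspace $\Omega(G)$, so by the already-proved implication (i) $\Rightarrow$ (iii) of Theorem~\ref{cor_char. OmegaG metr.} there is an end-faithful normal tree $T \subseteq G$. It remains to upgrade $T$ to a \emph{spanning} normal tree, equivalently, to show $V(G)$ is a countable union of dispersed sets (Jung's Theorem~\ref{thm_jung}). The idea is that metrizability of $\hat{V}(G)$ gives us a countable sieve on the vertices: fix a metric $d$ on $\hat{V}(G)$ inducing its topology, and for each $n$ consider the cover of $\hat{V}(G)$ by $1/n$-balls around its points. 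The set of vertices not lying on $T$ is partitioned by the components $C$ of $G - T$; each such $C$ contains no end (as $T$ is end-faithful, every end already has a ray in $T$ — more carefully, $C$ meets no $\Abs{C}$, so $C$ is finite-neighbourhooded and by end-faithfulness contains no end), hence $C$ is dispersed. Using the metric, one shows the family $\{C : C \text{ component of } G-T\}$ can be organised into countably many subfamilies, each of which is "uniformly dispersed," so that each union $U_k$ of the $k$-th subfamily is dispersed in $G$; then $V(G) = V(T) \cup \bigcup_k U_k$ is a countable union of dispersed sets, and Jung's Theorem~\ref{thm_jung} produces a normal spanning tree.

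The main obstacle I anticipate is precisely the last step: knowing that each individual component $C$ of $G - T$ is dispersed does not immediately make their union dispersed, and one must genuinely exploit metrizability (equivalently, some countability/second-countability consequence of it) to bundle the components into countably many dispersed pieces. The cleanest route is probably to observe that a metrizable $\hat{V}(G)$ is, via the compatible structure, also separable-on-each-component or at least admits a countable family of "levels" from the metric $d$ which one matches against the (finite) neighbourhoods $N(C) \subseteq V(T)$; since $V(T)$ decomposes by tree-levels into countably many dispersed sets already, one transfers this countable stratification across the attaching chains $N(C)$ to the components $C$. Verifying that each resulting piece is closed in $|G|$ (hence dispersed) is then a routine check using that the relevant neighbourhoods $N(C)$ recede to infinity along $T$. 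I would present this bundling argument in detail, citing Theorem~\ref{cor_char. OmegaG metr.} and Jung's Theorem~\ref{thm_jung} for the two endpoints.
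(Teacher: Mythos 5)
Your implications (iii) $\Rightarrow$ (i) and (i) $\Rightarrow$ (ii) are fine and are treated exactly as in the paper (routine, resp.\ trivial). The problem is (ii) $\Rightarrow$ (iii). Applying Theorem~\ref{cor_char. OmegaG metr.} only to the subspace $\Omega(G)$ gives an end-faithful normal tree $T$, which is strictly weaker information than a normal spanning tree, and your plan for upgrading $T$ contains a false step and then leaves the real work unsupplied. Concretely, the claim that each component $C$ of $G-T$ has finite neighbourhood, ``contains no end'' and is therefore dispersed \emph{because $T$ is end-faithful} is wrong: take $G=K_{\aleph_1}$ and let $T$ be a normal ray $v_1v_2\ldots$ in it. Then $T$ is end-faithful (there is only one end), but $G-T$ is a single component with infinite neighbourhood which is nowhere near dispersed --- and indeed $G$ has no normal spanning tree, consistent with the fact that here $\hat{V}(G)$ is not metrizable. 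So any correct upgrade must genuinely invoke the metric on $\hat{V}(G)$, and that is precisely the ``bundling'' step you only gesture at (``organise into countably many uniformly dispersed subfamilies''); as written there is no argument there, and this is the entire content of the implication.

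Two ways to close the gap. The paper's route avoids the upgrade problem altogether: since $\hat{V}(G)\cong\Omega(G^+)$ (Polat's observation in Section~\ref{sec_prelims}), metrizability of $\hat{V}(G)$ lets one apply Theorem~\ref{cor_char. OmegaG metr.} to $G^+$ instead of $G$; an end-faithful normal tree of $G^+$ must send a ray into the end of each attached ray $R_v$, and every such ray passes through $v$, so the tree contains every vertex of $G$ and is automatically spanning, whence $G$ has a normal spanning tree. Alternatively, a direct repair of your approach that bypasses $T$ entirely: fix a metric $d$ on $\hat{V}(G)$ inducing its topology, note that $\Omega(G)$ is closed in $\hat{V}(G)$ (vertices are isolated there), so every vertex has positive distance to $\Omega(G)$, and set $D_n:=\{\,v\in V(G)\colon d(v,\Omega(G))\geq 1/n\,\}$. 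Each $D_n$ is dispersed: for any end $\omega$ some basic neighbourhood $(V(C(X,\omega))\cup\Omega(X,\omega))$ lies inside the $1/n$-ball around $\omega$ and hence avoids $D_n$. Since $V(G)=\bigcup_n D_n$, Jung's Theorem~\ref{thm_jung} yields the normal spanning tree. Either of these replaces your sketched bundling argument; without one of them the proposal does not prove (ii) $\Rightarrow$ (iii).
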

\begin{proof}
The first implication (iii) $\Rightarrow$ (i) is routine, see e.g.\ \cite{diestel2006end}. The implication (i) $\Rightarrow$ (ii) is trivial. For (ii) $\Rightarrow$ (iii) apply Theorem~\ref{cor_char. OmegaG metr.} to the space $\Omega(G^+) \cong \hat{V}(G)$, noting that every end-faithful normal tree of $G^+$ is automatically spanning.
\end{proof}
To motivate our next applications, suppose that a given graph $G$ admits a normal spanning tree. Let us call such graphs \emph{normally spanned}. If $G$ is normally spanned, then every component of $G-X$ is normally spanned, too, for any finite $X \subseteq V(G)$.  Conversely, the question arises whether a graph admits a normal spanning tree as soon as every end $\omega$ has some basic neighbourhood $C(X,\omega)$ that is normally spanned. It turns out that the answer is yes:

\begin{theorem}
\label{thm_local NST gives global NST}
If every end of a connected graph $G$ has a normally spanned neighbourhood, then $G$ itself is normally spanned.
\end{theorem}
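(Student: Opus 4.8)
The plan is to reduce to Jung's Theorem~\ref{thm_jung}: it suffices to write $V(G)$ as a countable union of dispersed sets. The natural strategy is to apply Theorem~\ref{thm_paracompactFast} to the collection $\cC = \{\,C(X_\omega,\omega) \colon \omega \in \Omega(G)\,\}$, where for each end $\omega$ we fix a finite $X_\omega$ witnessing that $C(X_\omega,\omega)$ is normally spanned. This yields a rayless normal tree $T$ in $G$ such that every component $D$ of $G - T$ is contained in some $C(X_\omega,\omega)$, hence is itself normally spanned (a subgraph of a normally spanned graph obtained by removing a finite vertex set is normally spanned, and $D$ is such a subgraph of $C(X_\omega,\omega)$ after removing the finite set $N_{C(X_\omega,\omega)}(D)$; more carefully, $D$ is a component of $C(X_\omega,\omega) - (V(T) \cap C(X_\omega,\omega))$, and one must check that $V(T) \cap C(X_\omega,\omega)$ is dispersed in $C(X_\omega,\omega)$ — which follows since a dispersed set stays dispersed in any subgraph — so by the moreover-part of Jung's Theorem it extends to a rayless normal tree, whence each such component is normally spanned).

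So $V(T)$ is dispersed (being a rayless normal tree, by Jung), and each component $D$ of $G - T$ has $V(D) = \bigcup_{k \in \N} U_D^k$ with each $U_D^k$ dispersed in $D$. The task is then to assemble these into countably many sets dispersed in $G$. The key structural fact to exploit is that $T$ is normal, so for each component $D$ of $G - T$ the neighbourhood $N(D)$ is a finite chain in $T$ (finite since $T$ is rayless), hence has a maximal element $t(D) \in V(T)$. For a fixed $t \in V(T)$, the components $D$ with $t(D) = t$ have pairwise the same relationship to the rest: more precisely, I claim that for each $t$ and each $k$, the set $W_t^k := \bigcup \{\,U_D^k \colon t(D) = t\,\}$ is dispersed in $G$. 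Indeed, given an end $\eta$ of $G$, one needs a finite $Y$ with $C(Y,\eta) \cap W_t^k = \emptyset$. Since $V(T)$ is dispersed there is finite $Y_0$ with $C(Y_0,\eta) \cap V(T) = \emptyset$, so $C(Y_0,\eta)$ is contained in a single component $D_0$ of $G - T$; if $t(D_0) \neq t$ then already $C(Y_0,\eta) \cap W_t^k = \emptyset$ whenever $C(Y_0,\eta)$ meets no other $D$ with $t(D) = t$ — but distinct components of $G - T$ are disjoint and $C(Y_0,\eta) \subseteq D_0$, so this is automatic. If $t(D_0) = t$, then $\eta$ is an end of $D_0$ as well (it lives in $D_0$ eventually), and since $U_{D_0}^k$ is dispersed in $D_0$ there is a finite $Y_1 \subseteq V(D_0)$ with $C_{D_0}(Y_1,\eta) \cap U_{D_0}^k = \emptyset$; taking $Y := Y_0 \cup Y_1$ gives $C(Y,\eta) \subseteq C_{D_0}(Y_1,\eta) \subseteq D_0$, which meets $W_t^k$ only in $U_{D_0}^k$, hence not at all. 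Either way we are done.

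Since $V(G) = V(T) \cup \bigcup_{t \in V(T)} \bigcup_{k \in \N} W_t^k$ and the index set $V(T)$ is itself a countable union of dispersed sets (namely its levels $L_0, L_1, \ldots$, each of which — being a subset of the dispersed set $V(T)$ — is dispersed in $G$), we can reorganize: for each pair $(n,k) \in \N^2$ set $Z_{n,k} := \bigcup \{\,W_t^k \colon t \in L_n\,\}$, and verify that $Z_{n,k}$ is dispersed in $G$ by essentially the argument above — the point being that for a fixed end $\eta$, once we pass to a finite $Y_0$ with $C(Y_0,\eta) \cap V(T) = \emptyset$, the component $D_0 \supseteq C(Y_0,\eta)$ has a single value $t(D_0)$, so at most one $W_t^k$ with $t \in L_n$ is relevant, and we conclude as before. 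Then $V(G) = \bigcup_n L_n \cup \bigcup_{n,k} Z_{n,k}$ is a countable union of dispersed sets, so by Jung's Theorem~\ref{thm_jung} the graph $G$ has a normal spanning tree, i.e.\ is normally spanned.

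The main obstacle I anticipate is the bookkeeping around ends of $G$ versus ends of the components $D$ of $G - T$: one must be careful that an end $\eta$ of $G$ restricts to a genuine end (or at least a direction) of whichever component $D_0$ eventually contains it, and that "dispersed in $D_0$" then delivers the needed finite separator back in $G$. This is routine but needs the observation that $C(Y_0,\eta) \cap V(T) = \emptyset$ forces $C(Y_0,\eta)$ into a single component of $G - T$, which is where normality/raylessness of $T$ (giving dispersedness of $V(T)$) is used; everything else is elementary combinatorial manipulation of the countably many dispersed families.
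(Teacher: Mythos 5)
Your proposal is correct and takes essentially the same route as the paper: apply Theorem~\ref{thm_paracompactFast} to the chosen normally spanned neighbourhoods, use Jung's Theorem~\ref{thm_jung} to split each component of $G-T$ into countably many dispersed sets, and reassemble these together with $V(T)$ into countably many sets dispersed in $G$. The only difference is that your extra grouping by the top vertex $t(D)$ of $N(D)$ and by the levels of $T$ is unnecessary: the paper simply takes $V_n = \bigcup\{\,V_n^C \colon C \text{ a component of } G-T\,\}$ over all components at once, and your own one-component-at-a-time dispersedness check (separate the end from the dispersed set $V(T)$ first, so that only a single component $D_0$ remains relevant) works verbatim for that coarser union.
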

\begin{proof}
Let $\cC=\{\,C(X_\omega,\omega) \colon \omega \in \Omega(G)\,\}$ be a selection of normally spanned  neighbourhoods for all ends of $G$, and apply Theorem~\ref{thm_paracompactintro} to $\cC$ to find a rayless normal tree $T$ such that the collection of components of $G - T$ refines $\cC$. By Jung's Theorem~\ref{thm_jung}, each such component $C$ of $G-T$ is the union of countably many dispersed sets, say $V(C) = \bigcup_{n\geq 1} V^C_n$. But then $V_0 = V(T)$ together with all the sets $V_n:= \bigcup \{ \, V_n^C \colon C \text{ a component of } G - T \, \}$, for $n \geq 1$, witnesses that $V(G)$ is a countable union of dispersed sets. Hence, $G$ has a normal spanning tree by Jung's theorem.
\end{proof}

There is also a more topological viewpoint on the above result: The assumptions of Theorem~\ref{thm_local NST gives global NST} are by Corollary~\ref{cor_char. modG metr.} equivalent to the assertion that $\hat{V}(G)$ is locally metrizable. But locally metrizable paracompact spaces are metrizable, \cite[Exercise~5.4.A]{EngelkingBook}. Hence, applying Corollary~\ref{cor_char. modG metr.} once again to $\hat{V}(G)$ yields the desired normal spanning tree of $G$.

Continuing along these lines, we now address the question whether the existence of some \emph{local end-faithful normal tree} for every end of a graph already ensures the existence of an end-faithful normal tree of the entire graph. For a graph $G$ and an end $\omega$, we say that $\omega$ has a \emph{local end-faithful normal tree} if there is a normal tree $T$ in $G$ such that $\Abs{T}$ is a neighbourhood of $\omega$ in $\Omega(G)$.

\begin{theorem}\label{thm_local normal tree gives global normal tree}
If every end of a connected graph $G$ has a local end-faithful normal tree, then $G$ has an end-faithful normal tree.
\end{theorem}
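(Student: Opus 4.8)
The plan is to combine Theorem~\ref{thm_paracompactFast} with the same "merge countably many normal trees" strategy used in Theorem~\ref{thm_local NST gives global NST}, but now keeping track of end-faithfulness rather than just normal spannedness. First I would fix, for every end $\omega$, a normal tree $T_\omega$ in $G$ whose end-boundary $\Abs{T_\omega}$ is a neighbourhood of $\omega$; by shrinking, we may assume $\Abs{T_\omega}\supseteq\Omega(X_\omega,\omega)$ for some finite $X_\omega$, so that $\cU=\{\Omega(X_\omega,\omega):\omega\in\Omega(G)\}$ is an open cover of $\Omega(G)$. Apply Corollary~\ref{cor_ultrapara} to $\cU$ to obtain a rayless normal tree $T_0$ in $G$ whose components induce an open partition $\{\Omega(Y_j,\omega_j):j\in J\}$ of $\Omega(G)$ refining $\cU$. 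For each $j\in J$ pick $\omega^*_j$ with $\Omega(Y_j,\omega_j)\subseteq\Omega(X_{\omega^*_j},\omega^*_j)\subseteq\Abs{T_{\omega^*_j}}$, and let $C_j$ be the component of $G-T_0$ with $\Abs{C_j}=\Omega(Y_j,\omega_j)$ (the component containing the relevant rays). The idea is that inside each $C_j$ the fixed tree $T_{\omega^*_j}$ already witnesses end-faithfulness "locally", and we want to splice a suitable piece of it below $T_0$.

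The key step is to handle a single component $C_j$: I claim that, after possibly enlarging $T_0$ by a rayless normal tree (which is harmless, as such additions keep $T_0$ rayless and dispersed by Jung's Theorem~\ref{thm_jung}), each $C_j$ contains a normal tree $T_j'$ with root in $N(C_j)$ that is end-faithful \emph{as a subgraph of $C_j\cup N(C_j)$}, i.e.\ captures every end of $G$ lying in $\Abs{C_j}=\Omega(Y_j,\omega_j)$. To get this, intersect $T_{\omega^*_j}$ with $C_j$: the components of $T_{\omega^*_j}\cap C_j$ are normal subtrees of $C_j$, and since $\Abs{T_{\omega^*_j}}\supseteq\Omega(Y_j,\omega_j)$, these subtrees collectively meet $C(X,\omega)\cap C_j$ for every $\omega\in\Omega(Y_j,\omega_j)$ and all finite $X$; now connect these finitely-or-countably-many subtrees up to a common root in $N(C_j)$ through $C_j$ using the connectedness of $C_j$ plus Jung's Theorem, arranging normality by only adding dispersed vertex sets. (One checks the resulting tree is still normal in $G$ since a $T_j'$-path either stays inside $C_j$, where comparability follows from normality of $T_{\omega^*_j}$ after the linking vertices are placed on a common chain, or uses $N(C_j)$, which is a finite chain below everything.)

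Finally, assemble $T=T_0\cup\bigcup_{j\in J}T_j'$, rooted at the root of $T_0$, with each $T_j'$ hanging off its chain $N(C_j)\subset T_0$. This $T$ is normal in $G$: any $T$-path lies inside a single $C_j\cup N(C_j)$ (since distinct $C_j$ are separated by $T_0$), where normality was arranged. It is end-surjective because every end of $G$ lies in some $\Abs{C_j}$ (as the $\Omega(Y_j,\omega_j)$ partition $\Omega(G)$) and is captured by $T_j'$, or lies in $\Abs{T_0}$; and normal trees are always end-injective, so $T$ is end-faithful. The main obstacle I expect is the linking step in the single-component argument: showing that the countably many normal subtrees $T_{\omega^*_j}\cap C_j$ can be joined to a single root inside $C_j$ while preserving normality in $G$ and without creating a ray — here one wants to invoke Jung's theorem in the form "a rayless normal tree extends along any dispersed set", applied inside $C_j$ with the finite set $N(C_j)$ prepended as an initial chain, and to argue that the union of the (countably many) $T_{\omega^*_j}\cap C_j$ together with suitable connecting paths is itself dispersed in $C_j$ by a diagonal argument much like the raylessness argument in the proof of Theorem~\ref{thm_paracompactFast}.
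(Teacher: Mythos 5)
Your overall skeleton (cover $\Omega(G)$ by neighbourhoods $\Omega(X_\omega,\omega)\subseteq\Abs{T_\omega}$, apply Corollary~\ref{cor_ultrapara} to get a rayless normal tree $T_0$ whose components partition the end space, then build an end-faithful tree inside each component $C_j$ and hang it below the finite chain $N(C_j)$) is a reasonable combinatorial route, and the final splicing and end-counting step is fine: ends of $G$ correspond to ends of the components of $G-T_0$, and normal trees are end-injective. But the crucial single-component step, as you argue it, fails. You propose to include the pieces $T_{\omega^*_j}\cap C_j$ (plus connecting paths) into a normal tree of $C_j$ by invoking Jung's Theorem~\ref{thm_jung} and arguing that this vertex set is \emph{dispersed} in $C_j$. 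It cannot be dispersed whenever $\Abs{C_j}\neq\emptyset$, which is exactly the interesting case: a normal tree $T_{\omega^*_j}$ with $\Abs{T_{\omega^*_j}}\supseteq\Omega(Y_j,\omega_j)$ meets every neighbourhood $C(X,\omega)$ of every end $\omega\in\Omega(Y_j,\omega_j)$, and (up to the finitely many vertices outside $C_j$ that matter) so does $T_{\omega^*_j}\cap C_j$; by definition such a set is not dispersed in $C_j$. More fundamentally, the tree $T_j'$ you are trying to build must contain rays converging to every end in $\Abs{C_j}$, so it cannot be produced by the rayless-normal-tree machinery of Jung's theorem at all; no ``diagonal argument as in Theorem~\ref{thm_paracompactFast}'' can rescue a statement that is false by definition. (Two smaller soft spots: the components of $T_{\omega^*_j}\cap C_j$ need not be countably many, and, rooted at their minimal vertices, they need not be normal in $C_j$, since a path in $C_j$ between incomparable vertices of one piece may pass through another piece; your parenthetical ``after the linking vertices are placed on a common chain'' does not explain how this is arranged.)

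For comparison, the paper's proof is entirely different and avoids any splicing: each local end-faithful normal tree makes some neighbourhood of its end metrizable (Theorem~\ref{cor_char. OmegaG metr.}), $\Omega(G)$ is ultra-paracompact (Corollary~\ref{cor_ultrapara}), locally metrizable paracompact spaces are metrizable, and then Theorem~\ref{cor_char. OmegaG metr.} (i)$\Rightarrow$(iii) yields the end-faithful normal tree. If you want to keep your component-wise construction, the correct way to fill the gap is not Jung's theorem but a metrizability argument inside each $C_j$: show that $\Omega(C_j)\cong\Omega(Y_j,\omega_j)\subseteq\Abs{T_{\omega^*_j}}$ is metrizable (the boundary $\Abs{T}$ of a normal tree is ultrametrizable), apply Theorem~\ref{cor_char. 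OmegaG metr.} to the connected graph $C_j$ to obtain an end-faithful normal tree of $C_j$ rooted at a neighbour of the top of the chain $N(C_j)$, and then splice as you do. That repair works, but note it replaces your key step by an appeal to the very metrization circle of ideas the paper uses globally.
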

\begin{proof}
By Theorem~\ref{cor_char. OmegaG metr.} every end in $\Omega(G)$ has a metrizable neighbourhood. But (ultra-)paracompact spaces which are locally metrizable are metrizable,  \cite[Exercise~5.4.A]{EngelkingBook}. Consequently, we have by Corollary~\ref{cor_ultrapara} that $\Omega( G )$ is metrizable. Applying again Theorem~\ref{cor_char. OmegaG metr.} yields the desired end-faithful normal tree of $G$.
\end{proof}

\section{Paracompactness in subspaces of end spaces}
\label{sec_last}

We conclude this paper with an observation concerning the following fundamental problem on the structure of end spaces raised by Diestel in 1992 {\cite[Problem~5.1]{diestel1992end}}:

\begin{mainproblem}
\label{prob_main}
Which topological spaces can be represented as an end space $\Omega(G)$ for some graph $G$?
\end{mainproblem}

In Corollary~\ref{cor_ultrapara} we established that end spaces are always ultra-paracompact. In this section we show that also all subspaces of end spaces inherit the property of being ultra-paracompact, i.e.\ that end spaces are hereditarily ultra-paracompact. This significantly reduces the number of topological candidates for a solution of Problem~\ref{prob_main}, and for example shows that certain compact spaces cannot occur as end space, which  Corollary~\ref{cor_ultrapara} wouldn't do on its own.

It is known that paracompactness and ultra-paracompactness, along with a number of other properties which are not per se hereditary such as normality and collectionwise normality, have the property that they are inherited by \emph{all} subspaces as soon as they are inherited by all \emph{open} subspaces. For the easy proof in case of paracompactness see e.g.\ Dieudonn\'e's original paper \cite[p.~68]{dieudonne1944generalisation}. Hence, our assertion follows at once from Corollary~\ref{cor_ultrapara} given the following  observation:

\begin{lemma}
\label{lem_opensubspaces}
Open subsets of end spaces are again end spaces.
\end{lemma}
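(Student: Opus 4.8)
The goal is to show that every open subset $U$ of an end space $\Omega(G)$ is homeomorphic to $\Omega(G')$ for some graph $G'$. The plan is to build $G'$ as an induced subgraph of $G$ (after possibly modifying $G$ on a dispersed set) so that $\Omega(G') \cong U$. First I would invoke Corollary~\ref{cor_ultrapara}: since $U$ is open in $\Omega(G)$, it is covered by basic open sets, and applying the corollary to this cover of $U$ (extended to a cover of $\Omega(G)$ by, say, throwing in $\Omega(G) \setminus \overline{U}$-type pieces, or more carefully working inside $U$) yields a rayless normal tree $T$ in $G$ such that the components of $G-T$ induce an open partition of $\Omega(G)$ in which the cells meeting $U$ are entirely contained in $U$. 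This is the key structural input: it lets us isolate $U$ as a disjoint union of sets of the form $\Omega(X_j, C_j)$ where each $C_j$ is a component of $G - T$ whose end set lies inside $U$.

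Next, having decomposed $U = \bigsqcup_{j \in J} \Omega(X_j, C_j)$ with the $C_j$ components of $G-T$, I would let $G'$ be the subgraph of $G$ obtained by deleting $T$ (or rather deleting all of $V(T)$ and all components of $G-T$ not among the $C_j$), possibly after adding a single apex vertex $v^*$ joined to one vertex in each $C_j$ so that $G'$ becomes connected without affecting its end space (this is the standard trick recalled in Section~\ref{sec_prelims}). I then need to check that $\Omega(G') = \bigsqcup_j \Omega(C_j)$ and that the natural map $\Omega(C_j) \to \Omega(X_j, C_j) \subseteq \Omega(G)$ is a homeomorphism onto its image. Surjectivity and injectivity hold because each $C_j$ is a component of $G$ minus a finite-neighbourhood set (as $T$ is rayless, $N(C_j)$ is a finite chain), so no end of $G$ living in $\Omega(X_j, C_j)$ is killed or merged when passing to $C_j$, and no ray of $C_j$ becomes equivalent to a ray outside it. Continuity in both directions follows since the basic open sets $\Omega(Y, D)$ of $\Omega(G)$ with $D \subseteq C_j$ correspond exactly to basic open sets of $\Omega(C_j)$, using that finite separators inside $C_j$ together with $N(C_j)$ generate the relevant finite separators of $G$.

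The main obstacle I anticipate is the bookkeeping around connectivity and the apex vertex: one must ensure that gluing the $C_j$ together through $v^*$ (or through a ray, à la $G^+$, if one wants to also control $\hat V$) genuinely does not create a new end, and that the partition cells really do sit inside $U$ rather than merely refining the cover in a weaker sense — this is exactly what Corollary~\ref{cor_ultrapara} buys us, so the argument hinges on applying it correctly to a cover of $\Omega(G)$ (not just of $U$) whose cells over $U$ refine the given open cover witnessing that $U$ is open, while the cells over $\Omega(G) \setminus U$ are irrelevant and can be left coarse. A secondary technical point is that $U$ need not be closed, so $\overline{U} \setminus U$ may be nonempty; but since we only delete $T$ and the non-selected components, the ends in $\overline U \setminus U$ simply do not appear in $\Omega(G')$, which is fine. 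Modulo these checks, the homeomorphism $\Omega(G') \cong U$ is immediate from the disjointness of the open partition and the fact that a topological disjoint union of the $\Omega(C_j)$'s, each carrying its subspace topology from $\Omega(G)$, is precisely $U$ with its subspace topology.
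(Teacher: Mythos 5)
There is a genuine gap at your very first step, and it cannot be patched within your framework. You want a rayless normal tree $T$ such that the components of $G-T$ induce an open partition of $\Omega(G)$ in which every cell meeting $U$ is contained in $U$; equivalently, $U$ would be a union of cells of an open partition of $\Omega(G)$. But any union of cells of an open partition is clopen (its complement is the union of the remaining cells), so such a partition exists only when $U$ is closed as well as open. For a general open $U$ with $\overline{U}\setminus U\neq\emptyset$ (e.g.\ the complement of a non-isolated point in the end space of the binary tree), every end $\eta\in\overline{U}\setminus U$ lies in a cell that meets $U$ but is not contained in $U$: Corollary~\ref{cor_ultrapara} only guarantees that this cell lies inside the cover element you chose for some end outside $U$, and for boundary ends such elements necessarily meet $U$. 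Consequently your decomposition $U=\bigsqcup_{j}\Omega(X_j,C_j)$ fails: the components whose end sets lie inside $U$ do not account for the ends of $U$ sitting in these boundary cells, so the graph $G'$ you build misses them (and if you instead keep every component meeting $U$, then $\Omega(G')$ picks up ends outside $U$). The fallback you mention, ``working inside $U$'', is circular: ultra-paracompactness of the subspace $U$ is Corollary~\ref{cor_herpara}, which the paper derives from this very lemma. Your argument does prove the lemma in the special case that $U$ is clopen, but not in general; the issue is not merely, as you suggest, that the ends of $\overline{U}\setminus U$ fail to appear in $\Omega(G')$.

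For contrast, the paper's proof avoids partitions altogether: it takes a maximal collection $\mathcal{R}$ of pairwise disjoint rays belonging to ends in the complement of $U$, deletes $W=\bigcup_{R\in\mathcal{R}}V(R)$, and shows $\Omega(G-W)\cong U$. Closedness of the complement guarantees $\partial_\Omega W$ avoids $U$, and maximality of $\mathcal{R}$ guarantees that every ray surviving in $G-W$ belongs to an end of $U$; this is exactly how the boundary $\overline{U}\setminus U$ is handled, which is the point your construction cannot reach. Rescuing a partition-based argument would seem to require recursing into the bad cells, i.e.\ a substantially different and more involved proof.
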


\begin{proof}
Let $G$ be any graph, and consider some open, non-empty set $\Gamma \subseteq \Omega(G)$.
Write $\Gamma^{\complement}$ for its complement in $\Omega(G)$. 
Using Zorn's lemma, pick a maximal collection $\mathcal{R}$ of disjoint rays all belonging to ends in $\Gamma^\complement$, and let $W$ be the union $\bigcup \,\{\,V(R)\colon R \in \mathcal{R}\,\}$ of their vertex sets. 
Note that $\Abs{W}\subset\Gamma^\complement$ because $\Gamma^\complement$ is closed.
We claim that $\Gamma$ is homeomorphic to the end space of the graph $G':=G-W$.

In order to find a homeomorphism $\varphi \colon \Omega(G') \to \Gamma$, note first that, due to the maximality of $\mathcal{R}$, every ray in $G'$ is (as a ray of $G$) contained in an end of $\Gamma$. Consequently, every end $\omega'$ of $G'$ is contained in a unique end $\omega$ of $\Gamma$ and we define $\varphi$ via this correspondence.

To see that $\varphi$ is surjective, consider an open neighbourhood $\Omega(X,\omega) \subseteq \Gamma$, for a given $\omega \in \Gamma$. Then $W$ has only finite intersection with $C(X,\omega)$, as only finitely many rays from $\mathcal{R}$ can intersect $C(X,\omega)$, but do not have a tail in $C(X,\omega)$. So we may assume that $C(X,\omega)$ is contained in $G'$, by extending $X$. Now, every ray of $\omega$ contained in $C(X,\omega)$ gives an end in $G'$ that is mapped to $\omega$.

To see that $\varphi$ is injective, suppose there are two rays $R_1 , R_2$ in $G'$ that are not equivalent in $G'$ but equivalent in $G$. Then, there are infinitely many pairwise disjoint $R_1$-$R_2$ paths in $G$ and all but finitely many of these paths hit $W$. Then the end $\omega$ of $G$ containing $R_1$ and $R_2$ is an end in $\Gamma$ which lies in the closure of~$\Gamma^\complement$, contradicting the fact that $\Gamma^\complement$ is closed.

Finally, let us show that $\varphi$ is continuous and open. For the continuity of $\varphi$ remember that for any open set $\Omega(X,\omega) \subseteq \Gamma$ we may assume that $C(X, \omega)$ is contained in $G'$. In particular the preimage of  $\Omega(X,\omega)$ is open in $G'$. 

For $\varphi$ being open, consider an open set $ \Omega(X,\omega') \subseteq \Omega(G')$. Now, $C(X, \omega') \subseteq G' - X$ might not be a component of $G-X$. However, the set of vertices in $C(X, \omega')$ having a neighbour in $W$ is dispersed. Again by extending $X$, we may assume that $C(X, \omega')$ is a component of $G-X$. Consequently, its image is open in $\Omega(G)$.
\end{proof}

\begin{corollary}
\label{cor_herpara}
All end spaces are hereditarily ultra-paracompact. \qed
\end{corollary}

Interestingly, a careful reading of Sprüssel's proof that spaces $|G|$ are normal from \cite{sprussel2008end} establishes that every end space $\Omega(G)$ is in fact \emph{completely normal}, i.e.\ that  subsets with $\overline{A}\cap B = \emptyset = A \cap \overline{B}$ can be separated by disjoint open sets -- a property which is equivalent to hereditary normality, see \cite[Theorem~2.1.7]{EngelkingBook}. In any case, also this stronger result of hereditary normality is implied by our paracompactness result in Corollary~\ref{cor_herpara}.

\bigskip
{\bf Future steps.} The results in this paper narrow down which topological spaces occur as end spaces. Still, a complete solution to Problem~\ref{prob_main} seems currently out of reach without significant new insights. We remark that similar to Theorem~\ref{cor_char. OmegaG metr.},  one can show that end spaces $\Omega(G)$ have the property that every  separable subspace $X \subset \Omega(G)$, i.e.\ every such $X$ with a countable dense subset, must be metrizable. We currently do not  know of an example of a hereditarily  ultra-paracompact space where  every separable subspace is metrizable that does not occur as the end space $\Omega(G)$ of  some graph $G$.

\bibliographystyle{plain}
\bibliography{reference}

\end{document}